\newtheorem{theorem}{Theorem}[section]
\newtheorem{proposition}[theorem]{Proposition}
\newtheorem{corollary}[theorem]{Corollary}
\numberwithin{equation}{section}
\theoremstyle{remark}
\newtheorem{remark}[theorem]{Remark}
\newtheorem*{remark*}{Remark}
\newtheorem{assumption}[theorem]{Assumption}
\newcommand{\ZZ}{{\mathbb Z}}
\newcommand{\RR}{{\mathbb R}}
\newcommand{\CC}{{\mathbb C}}
\newcommand{\VV}{{\mathbb V}}
\newcommand{\QQ}{{\mathbb Q}}
\newcommand{\Hom}{{\mathbb Hom}}
\newcommand{\LL}{{\mathbb L}}
\newcommand{\sO}{\mathscr{O}}
\newcommand{\cA}{\mathcal{A}}
\newcommand{\bP}{\mathbb{P}}
\newcommand{\bN}{\mathbf{N}}
\newcommand{\Cstar}{\mathbb{C}^{\times}}
\newcommand{\tD}{\widetilde{D}} 
\newcommand{\tomega}{{\tilde{\omega}}} 
\newcommand{\tX}{\widetilde{X}} 
\newcommand{\on}{\operatorname}
\DeclareMathOperator{\ex}{ex}
\DeclareMathOperator{\Cone}{Cone}
\DeclareMathOperator{\RHom}{RHom}
\DeclareMathOperator{\eval}{eval}
\title[Bondal-Orlov functors for toric DM stacks]
{Note on the Bondal-Orlov Functors for Toric DM Stacks}
\author{Yunfeng~~Jiang}
\address{Department of Mathematics\\ University of Kansas\\ 405 Snow Hall, 1460 Jayhawk Blvd\\ Lawrence, KS 66045 \\ USA}
\email{y.jiang@ku.edu}
\begin{document}

\maketitle

\begin{abstract}
We calculate explicit formulas for the general equivariant Bondal-Orlov  functors on the localized K-theory groups 
for a crepant birational transformation of toric DM stacks.  We recall some facts that the Bondal-Orlov functors give  equivalences on the bounded derived categories.  Applying twice of these functors we get the Seidel-Thomas spherical twists  for the derived category. 
\end{abstract}


\setcounter{section}{0}

\section{Introduction}

In this short note we calculate explicit formulas for the general equivariant Bondal-Orlov functors 
for  a crepant birational transformation of toric Deligne--Mumford (DM) stacks. 

Toric DM stacks were introduced by 
Borisov--Chen--Smith \cite{BCS} using stacky fans.  
The notion of extended stacky fan was introduced by Jiang in \cite{Jiang}, and it 
turns out that there is a one-to-one correspondence between the extended 
stacky fans and GIT data construction of toric DM stacks.
Given GIT data determined by a stability parameter $\omega$, 
we denote the toric DM stack by $X_\omega$,  
whose construction is reviewed in \S~\ref{sec:GIT}.
More details can be found in \cite{CIJ}. 
Birational transformation of toric DM stacks can be understood as changing the GIT stability parameters in the space of GIT stability conditions.

We study a special case of birational transformation of toric DM stacks: the \emph{crepant} birational transformations.  
We consider a special class of crepant birational transformations
($K$-equivalences) of toric DM stacks by a single wall crossing.  
The construction of such wall crossing can be found in \cite[\S~5.1]{CIJ}.  
There is a big torus $T$ action on the toric DM stack $X_\omega$,
and we work on the $T$-equivariant $K$-theory and bounded derived category on $X_\omega$. 
Y.~Kawamata in \cite{Ka} proves that a natural Fourier--Mukai transform 
induces equivalences of the bounded derived categories of 
$K$-equivalent toric DM stacks.
It was shown in \cite{CIJS} that the $T$-equivariant derived categories are also equivalent. 
In \cite[\S~6]{CIJ}, the authors calculated the equivariant Fourier--Mukai transform for $K$-theory basis of $X_\omega$ when restricted to torus fixed points. 

In this paper we  calculate explicit formulas for the general Bondal-Orlov functors in terms of  equivariant $K$-theory basis 
for a single toric wall crossing.  Let 
$$\varphi: X_+:=X_{\omega_{+}}\dasharrow X_-:=X_{\omega_{-}}$$ 
be a crepant transformation by a single wall crossing corresponding to the stability conditions $\omega_+$ and $\omega_-$.   The $T$-equivariant $K$-theory $K_0^T(X_\pm)$
are generated by equivariant line bundles corresponding to the lattice in the secondary fan. 
There is a common blow-up $\tX$ for both $X_+$ and $X_-$ and two contract maps $f_\pm: \tX\to X_\pm$.  
Let $E\subset \tX$ be the exceptional divisor. 
The general Bondal-Orlov functors are defined by:
\begin{equation}\label{BOk}
\mathbb{BO}_k=(f_+)_\star(\sO_{\tX}(kE)\otimes (f_-)^\star(--)): D_{T}^b(X_-)\to D_{T}^b(X_+)
\end{equation}
for any integer $k\in\ZZ$.  
We prove that $\mathbb{BO}_k$ is an equivalence on the equivariant bounded derived categories for any $k$. 
When $k=0$, $\mathbb{BO}_0$ is the usual Fourier-Mukai transform 
$\mathbb{FM}=(f_+)_\star((f_-)^\star(--))$. So $\mathbb{BO}_k$ can be taken as  {\em generalized} Fourier--Mukai transforms. 
The functors $\mathbb{BO}_k$, of course,  induce isomorphisms on the equivariant $K$-theory groups. 
Our computation gives explicit formulas of the Bobdal-Orlov functors
$\mathbb{BO}_k$ on the localized $K$-theory basis. See Theorem~\ref{theorem:GFM}.  
This generalizes the calculation of Theorem 6.19 in \cite{CIJ} for the  Fourier-Mukai transform $\mathbb{BO}_0=\mathbb{FM}$, although the proof is basically the same as in \cite{CIJ}.  In Theorem 6.23 of \cite{CIJ}, the authors prove that the Fourier-Mukai transform $\mathbb{BO}_0$ matches the analytic continuation of the $H$-functions for $X_\pm$, which implies the invariance of big quantum cohomology of $X_\pm$, see \cite[\S 5, 6]{CIJ} for details.  It is pretty interesting if the general Bondal-Orlov functors $\mathbb{BO}_k$ can match the analytic continuation of some hypergeometric functions for $X_\pm$.   

We also recall the fact that the Bondal-Orlov functors give an equivalence on the bounded derived categories of a single toric wall crossing.  The proof is based on the method of window shifted functor for the derived categories under GIT quotients by \cite{DS}, \cite{ADM} and \cite{HLS}.  We completely follow the proof of \S 5 in \cite{CIJS}.  Applying back for the Bondal-Orlov functor we get an autoequivalence of the bounded derived category which is called the spherical twist functor associated with a line bundle on the contraction locus in the sense of Seidel-Thomas in \cite{STh}.
We also give a proof that for a crepant birational transformation of toric DM stacks via a single wall crossing, the contraction locus are always weighted projective stacks.  This result is hidden somewhere in \cite{CIJ}, but there is no explicit explanation. 
The result presented here  is related to the monodromy conjecture in \cite{BMO} for Gromov-Witten theory of symplectic smooth DM stacks, see \cite{JT2}.
The result of the spherical twists can also be applied to find a correspondence for the Chen-Ruan cohomology for quasi-simple orbifold flops, see \cite{CJL}. 

This short note is organized as follows. In \S \ref{sec:GIT-CT} we review the construction of the cerpant transformation of toric DM stacks by a single wall crossing.  We calculate the general equivariant Bondal-Orlov functor on the localized $K$-theory basis  for the wall crossing of toric DM stacks in \S \ref{BO}.  In \S \ref{derived:equivalence} we recall the fact that the general equivariant Bondal-Orlov functors give an equivalence on the bounded derived categories for the wall crossing of toric DM stacks, and relate them to spherical twist associated with line bundles on the contraction locus.

\subsection*{Acknowledgement} 
Y. J. would like to thank E. Segal for valuable discussions on the Bondal-Orlov functors and spherical twists for toric DM stacks. 
Y. J.\ is partially supported by Simons Foundation Collaboration Grant 311837, and NSF Grant DMS-1600997.

\section{Crepant transformation of toric DM stacks}\label{sec:GIT-CT}
In this section  we review some basic facts and establish notations.
The main reference is \cite{CIJ}.

\subsection{Toric Deligne--Mumford stack and GIT quotient}\label{sec:GIT}

An \emph{$S$-extended stacky fan} 
is a quadruple $\mathbf{\Sigma}= (\bN,\Sigma,\beta,S)$, where:
\begin{itemize} 
\item $\bN$ is a finitely generated abelian group (torsions allowed); 
\item $\Sigma$ is a rational simplicial fan in $\bN\otimes \RR$;   
\item $\beta \colon \ZZ^m \to \bN$ is a homomorphism; 
we write $b_i = \beta(e_i)\in \bN$ for the image of the $i$th standard 
basis vector $e_i\in\ZZ^m$,
and write $\overline{b}_i$ for the image of $b_i$ in $\bN\otimes \RR$; 
\item $S \subset \{1,\dots,m\}$ is a subset, 
\end{itemize} 
such that:
\begin{itemize} 
\item each one-dimensional cone of $\Sigma$
is spanned by $\overline{b}_i$ for a unique 
$i\in \{1,\dots,m\}\setminus S$, and
each $\overline{b}_i$ with 
$i\in \{1,\dots,m\} \setminus S$ spans a one-dimensional 
cone of $\Sigma$; 

\item for $i\in S$, $\overline{b}_i$ lies in the support $|\Sigma|$ 
of the fan. 
\end{itemize} 
The vectors $b_i$ for $i\in S$ are called \emph{extended vectors}. 

The \emph{toric DM stack} associated to an extended stacky fan $(\bN,\Sigma,\beta,S)$ depends only on the underlying stacky fan and is defined as
the quotient stack
\[ 
 X_{\mathbf{\Sigma}} := [ U/K ], \quad \text{with} \quad 
 U=\CC^m\setminus \VV(I_{\Sigma}),
\]
where 
$I_{\Sigma}$ is the irrelevant ideal of the fan and $K:=\Hom(\LL^\vee, \Cstar)$ acts on $\CC^m$ through the data of extended stacky fan.

We require that the extended stacky fans $(\bN, \Sigma, \beta,S)$
satisfy the following conditions: 
\begin{itemize} 
\item[(C1)] the support $|\Sigma|$ 
of the fan is convex and full-dimensional; 
\item[(C2)] there is a strictly convex piecewise-linear function 
$f\colon |\Sigma| 
\to \RR$ that is linear on each cone of $\Sigma$; 
\item[(C3)] the map $\beta \colon \ZZ^m \to \bN$ is surjective. 
\end{itemize} 
The first two conditions are geometric constraints on $X_{\mathbf{\Sigma}}$: 
they are equivalent to saying 
that the corresponding toric stack $X_{\mathbf{\Sigma}}$ 
is semi-projective and has a torus fixed point. 
The third condition can be always achieved by adding  
enough extended vectors.

We explain the GIT construction of $X_{\mathbf{\Sigma}}$ from the extended
stacky fan $\mathbf{\Sigma}=(\bN, \Sigma,\beta,S)$ satisfying (C1-C3).
First we define a free $\ZZ$-module $\LL$ by the exact sequence
\begin{equation}\label{eq:exact}
  \xymatrix{
    0 \ar[r] &
    \LL \ar[r] &
    \ZZ^m \ar[r]^\beta & 
   \bN  \ar[r] & 
    0 }
\end{equation}
and define $K := \LL\otimes \Cstar$. 
The dual of \eqref{eq:exact} is an exact sequence:
\begin{equation} 
\label{eq:divseq}
\xymatrix{
  0 \ar[r] &
  \bN^{\vee} \ar[r] &
  (\ZZ^m)^{\vee} \ar[r]& 
  \LL^{\vee}}
\end{equation} 
and we define the character $D_i \in \LL^\vee$ of $K$ to be the image 
of the $i$th standard basis vector in $(\ZZ^m)^\vee$ under 
the third arrow $(\ZZ^m)^\vee \to \LL^\vee$.  
Set
\[
\cA_{\omega}=\left\{I\subset \{1,2,\cdots,m\} \mid
S \subset I, \ \text{$\sigma_{\overline{I}}$ 
is a cone of $\Sigma$}\right\} .
\]
to be the collection of \emph{anticones}.
The \emph{stability condition} $\omega\in \LL^{\vee}\otimes\RR$ 
lies in $\bigcap_{I \in \cA_{\omega}} \angle_{I}$, where
$$\angle_I = \big\{ \textstyle\sum_{i \in I} a_i D_i \mid \text{$a_i \in
      \RR$, $a_i > 0$} \big\} \subset
    \LL^\vee \otimes \RR.$$
The condition (C2) ensures that this intersection is non-empty.  
We understand $\angle_\emptyset=\{0\}$. Let 
\[
 U_{\omega}=\bigcup_{I\in \cA_\omega}(\CC^\times)^{I}\times \CC^{\overline{I}}
 := (\CC^\times)^{I}\times \CC^{\overline{I}}=\{(z_1,\cdots,z_m) \in \CC^m \mid
 z_i\neq 0~ \mbox{for}~ i\in I\}.
\]
The \emph{GIT data} consists of
\begin{itemize} 
\item $K \cong (\Cstar)^r$, a connected torus of rank $r$; 
\item $\LL = \Hom(\Cstar,K)$, the cocharacter lattice of $K$; 
\item $D_1,\ldots,D_m \in \LL^\vee = 
\Hom(K,\Cstar)$, characters of $K$;
\item stability condition $\omega\in  \LL^{\vee}\otimes\RR$;
\item $\cA_\omega= 
    \big\{ I \subset \{1,2,\ldots,m\} : \omega \in \angle_I \big\}$.
\end{itemize} 
The stability condition $\omega$ satisfies the following assumptions:
\begin{assumption} \label{assumption}
  \begin{enumerate}
  \item[(A1)] $\{1,2,\ldots,m\} \in \cA_\omega$;
  \item[(A2)] for each $I \in \cA_\omega$, 
the set $\{D_i : i \in I\}$ spans $\LL^\vee \otimes \RR$ over $\RR$.
\end{enumerate}
\end{assumption}
(A1) ensures that $X_\omega$ is non-empty; 
(A2) ensures that $X_\omega$ is a DM stack. 
Under these assumptions, $\cA_\omega$ is closed under 
enlargement of sets; i.e., if $I\in \cA_\omega$ and 
$I\subset J$ then $J \in \cA_\omega$. 
The {toric DM stack} 
is the quotient stack
$X_{\mathbf{\Sigma}}=X_\omega=[U_\omega/K]$.

Conversely, to obtain an extended stacky fan from GIT data, 
consider the exact sequence
(\ref{eq:exact}). 
Let $b_i = \beta(e_i)\in \bN$ and $\overline{b}_i\in \bN\otimes \RR$ 
be as above and, given 
a subset $I$ of $\{1,\dots,m\}$, let $\sigma_I$ denote the cone in 
$\bN\otimes \RR$ generated by $\{\overline{b}_i : i\in I\}$. 
The extended stacky fan
$\mathbf{\Sigma}_\omega=(\bN, \Sigma_\omega, \beta,S)$
corresponding to our data 
consists of the group $\bN$ and the map $\beta$ defined 
above, together with a fan $\Sigma_\omega$ in $\bN\otimes\RR$ 
and $S$ given by 
\begin{align*} 
\Sigma_\omega  = \{\sigma_{I} : \overline{I} \in \cA_\omega\}, 
 \qquad S  = \{ i \in \{1,\dots,m\} : \overline{\{i\}}  
\notin \cA_\omega\}. 
\end{align*} 
The quotient construction in \cite[\S2]{Jiang} 
coincides with the GIT quotient construction, 
and therefore $X_\omega$ is the toric DM 
stack corresponding to $\mathbf{\Sigma}_\omega$.

\subsection{Wall crossing and birational transformation}\label{sec:CRC:toric:DM:stacks}

The space $\LL^\vee \otimes \RR$ of stability conditions is divided into 
chambers by the closures of the sets $\angle_I$, $|I| = r-1$, 
and the DM stack $X_\omega$ depends on $\omega$ 
only via the chamber containing $\omega$.  
For any stability condition $\omega$, 
the set $U_\omega$ contains the big torus $T=(\Cstar)^m$.
Thus for any two such stability conditions $\omega_1$,~$\omega_2$ 
there is a canonical birational map 
$X_{\omega_1} \dashrightarrow X_{\omega_2}$, 
induced by the identity transformation between 
$T/K \subset X_{\omega_1}$ and 
$T/K \subset X_{\omega_2}$.  

Let $C_+$,~$C_-$ be chambers 
in $\LL^\vee \otimes \RR$ that are separated by a hyperplane wall $W$, 
so that $W \cap \overline{C_+}$ is a facet of $\overline{C_+}$,
$W \cap \overline{C_-}$ a facet of $\overline{C_-}$, 
and $W \cap \overline{C_+} = W \cap \overline{C_-}$.  
Choose stability conditions $\omega_+ \in C_+$, $\omega_- \in C_-$ 
satisfying (A1-A2) and set $U_+ := U_{\omega_+}$, $U_- := U_{\omega_-}$,
$X_+ := X_{\omega_+}$, $X_- := X_{\omega_-}$, and
\begin{align*}
  & \cA_\pm := \cA_{\omega_{\pm}} = 
\big\{ I \subset \{1,2,\ldots,m\} : 
\omega_\pm \in \angle_I \big\} .
\end{align*}
Then $C_\pm = \bigcap_{I\in \cA_\pm} \angle_I$. 
Let $\varphi \colon X_+ \dashrightarrow X_-$ be 
the birational transformation induced by the toric wall-crossing from $C_+$ to $C_-$
and suppose that $\sum_{i=1}^m D_i \in W$ which implies that $\varphi$ is crepant. 
Let $e \in \LL$ denote the \emph{primitive lattice vector} in $W^\perp$ 
such that $e$ is positive on $C_+$ and negative on $C_-$. 
We fix the notations
\begin{itemize} 
\item  $M_{+}:=\{i\in\{1,\cdots, m\}| D_i\cdot e>0\}$,
\item  $M_{-}:=\{i\in\{1,\cdots, m\}| D_i\cdot e<0\}$,
\item  $M_{0}:=\{i\in\{1,\cdots, m\}| D_i\cdot e=0\}$.
\end{itemize}

Choose $\omega_0$ from the relative interior of $W\cap \overline{C_+} 
= W \cap \overline{C_-}$. The stability condition $\omega_0$ 
does not satisfy (A1-A2) on GIT data, but consider 
\begin{align*} 
\cA_0 & := \cA_{\omega_0} = 
\left\{ I \subset \{1,\dots,m\} : 
\omega_0 \in \angle_I \right\} 
\end{align*} 
and the corresponding toric Artin stack $X_0 := X_{\omega_0} =[U_{\omega_0}/K]$.
Here $X_0$ is not a DM stack, as 
the $\Cstar$-subgroup of $K$ 
corresponding to $e\in \LL$ (the defining equation 
of the wall $W$) has a fixed point in $U_0 := U_{\omega_0}$. 
The stack $X_0$ contains both $X_+$ and $X_-$ as open substacks 
and the canonical line bundles of $X_{+}$ and $X_-$ 
are the restrictions of the same line bundle $L_0\to X_0$ 
given by the character ${-\sum_{i=1}^m} D_i$ of $K$. 
The condition $\sum_{i=1}^m D_i\in W$ ensures that $L_0$ comes from 
a $\QQ$-Cartier divisor on the underlying singular toric variety 
$\overline{X}_0 = \CC^m/\!\!/_{\omega_0} K$.  
There are canonical blow-down maps $g_\pm \colon X_\pm \to \overline{X}_0$, 
and $K_{X_\pm}=g_\pm^\star L_0$.  We have a commutative diagram:
\begin{equation}\label{eq:K-equivalence}
\xymatrix{
  &~\tX  \ar[rd]^{f_-} \ar[ld]_{f_+} & \\ 
  X_+ \ar[rd]_{g_+} \ar@{-->}^{\varphi}[rr] &  & X_- \ar[ld]^{g_-} \\ 
  & \overline{X}_0 & 
}
\end{equation}
This shows that $f_+^\star(K_{X_+}) = f_-^\star(K_{X_-})$ and
birational map $\varphi$ is \emph{crepant}, since they 
are the pull-backs of the same $\QQ$-Cartier divisor on $\overline{X}_0$. 

To construct $\tX$, consider the action of $K \times \Cstar$ on $\CC^{m+1}$ defined by the characters 
$\tD_1,\ldots,\tD_{m+1}$ of $K \times \Cstar$, where:
\[
\tD_j = 
\begin{cases}
  D_j \oplus 0 & \text{if $j < m+1$ and $D_j \cdot e \leq 0$} \\
  D_j \oplus ({-D_j} \cdot e) & \text{if $j < m+1$ and $D_j \cdot e > 0$} \\
  0 \oplus 1 & \text{if $j=m+1$}
\end{cases}
\]
Consider the chambers $\widetilde{C}_+$,~$\widetilde{C}_-$, 
and~$\widetilde{C}$ in $(\LL \oplus \ZZ)^\vee \otimes \RR$ 
that contain, respectively, the stability conditions
\begin{align*}
  \tomega_+ = (\omega_+,1) &&
  \tomega_- = (\omega_-,1) && \text{and} && 
  \tomega = (\omega_0, - \varepsilon)
\end{align*}
where $\varepsilon$ is a very small positive real number.  
Let $\tX$ denote the toric DM stack 
defined by the stability condition $\tomega$.  
We have, by \cite[Lemma~6.16]{CIJ}, that
the toric DM stack corresponding  to the chamber $\widetilde{C}_{\pm}$ is $X_{\pm}$. 
Furthermore, there is a commutative diagram as in \eqref{eq:K-equivalence}, 
where:
   $f_{\pm} \colon \tX \to X_{\pm}$ is a toric blow-up, 
   arising from the wall-crossing from $\widetilde{C}$ to $\widetilde{C}_{\pm}$.

\section{Generalized Bondal-Orlov transforms}\label{BO}

\subsection{Equivariant $K$-theory of toric DM stacks}

The big torus $T:=(\CC^\times)^m$ acts on the toric DM stack 
$X_\omega$ corresponding to a stability condition 
$\omega\in \LL^\vee\otimes\RR$ satisfying assumptions (A1-A2). 
The $T$-equivariant $K$-theory group 
$K_0^{T}(X_\omega)$ of $X_\omega$ is generated by the $T$-equivariant line 
bundles $R_i$ corresponding to the ray $\rho_i$ for each $i\in\{1,\cdots,m\}$. 

Recall that the \emph{torus fixed points of $X_\omega$ are in one-to-one 
correspondence with minimal anticones $\delta\in\cA_\omega$}.
 A minimal anticone $\delta$ determines a torus fixed point stack 
$x_{\delta}=BG_\delta\in X_\omega$, where $G_\delta$ is the isotropy group of 
the fixed point $x_{\delta}$.  
Let $i_\delta \colon x_\delta \to X_\omega$ denote the inclusion. We have
\begin{equation} \label{e:2.1}
 i_{\delta}^* R_j =1, \quad \forall j \in \delta.
\end{equation}

We recall the Lefschetz fixed point theorem
(c.f.\ \cite[Theorem 3.3]{CIJS} in this formulation).

\begin{theorem}
\label{theorem:localization}
Let $X_\omega = [U_\omega/K]$ be a toric DM stack.
The torus $T$ acts on $X_\omega$.  
Given $\delta \in \cA_\omega$, write $x_\delta$ for 
the corresponding $T$-fixed point of $X_\omega$.
Let $N_\delta$ denote the normal bundle to $i_\delta$. 
Let $\ZZ[T] = K_T^0({pt})$ denote the ring of 
regular functions (over $\ZZ$) on $T$ and let $\on{Frac}\ZZ[T]$ 
denote the field of fractions. 
Then for $\alpha \in K_T^0(X_\omega)$, we have 
\begin{align*}
\alpha = \sum_{\delta\in \cA_\omega} (i_\delta)_\star 
\left( 
\frac{i_\delta^\star \alpha}{\lambda_{-1} N_\delta^\vee}
\right) 
 \quad \in K_T^0(X_\omega)\otimes_{\ZZ[T]} \on{Frac}(\ZZ[T])
\end{align*}
where $\lambda_{-1} N_\delta^\vee  
:= \sum_{i=0}^{\dim X_\omega} (-1)^i \bigwedge^i N_\delta^\vee$ 
is invertible in $K_T^0(x_\delta)\otimes_{\ZZ[T]} 
\on{Frac}(\ZZ[T])$. 
\end{theorem}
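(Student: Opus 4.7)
The plan is to apply the equivariant $K$-theoretic localization theorem of Thomason, adapted to the setting of toric DM stacks as in \cite{CIJS}. The first step is to verify that $\lambda_{-1} N_\delta^\vee$ is a unit in $K_T^0(x_\delta)\otimes_{\ZZ[T]}\on{Frac}(\ZZ[T])$. Since $x_\delta = BG_\delta$ is an isolated $T$-fixed point, the normal bundle $N_\delta$ decomposes equivariantly as a sum of $T$-eigenlines $L_1,\ldots,L_n$ with $n=\dim X_\omega$, and each $L_i$ carries a nontrivial $T$-weight (arising from the rays indexed by $\{1,\ldots,m\}\setminus\delta$). Consequently $\lambda_{-1} N_\delta^\vee = \prod_{i=1}^n (1 - L_i^{-1})$ is a product of terms of the form $1-\chi$ for nontrivial $T$-characters $\chi$, and each such term is a unit of $\on{Frac}(\ZZ[T])$.

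Next I would invoke localization for the fixed-point inclusion $i\colon X_\omega^T = \bigsqcup_\delta x_\delta \hookrightarrow X_\omega$, which says that $i_\star$ becomes an isomorphism after tensoring with $\on{Frac}(\ZZ[T])$. Thus any class $\alpha$ admits a unique decomposition $\alpha = \sum_{\delta} (i_\delta)_\star \beta_\delta$ with $\beta_\delta \in K_T^0(x_\delta) \otimes_{\ZZ[T]} \on{Frac}(\ZZ[T])$. To pin down the coefficients, I would apply $i_{\delta'}^\star$ to both sides and combine two inputs: the base-change vanishing $i_{\delta'}^\star (i_\delta)_\star = 0$ for $\delta'\neq\delta$ (since the fixed points are disjoint), together with the self-intersection formula $i_{\delta'}^\star (i_{\delta'})_\star(-) = \lambda_{-1}(N_{\delta'}^\vee)\cdot(-)$. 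This yields $i_{\delta'}^\star \alpha = \lambda_{-1}(N_{\delta'}^\vee)\cdot\beta_{\delta'}$, and dividing by the unit $\lambda_{-1} N_{\delta'}^\vee$ produces the stated formula.

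The main technical obstacle is justifying both the self-intersection formula and the localization isomorphism in the stacky setting. For the self-intersection formula, the regular embedding $i_\delta\colon BG_\delta\hookrightarrow X_\omega$ admits a Koszul resolution of $(i_\delta)_\star\sO_{x_\delta}$ by the exterior algebra on $N_\delta^\vee$, and the scheme-level argument carries over because toric DM stacks are locally modeled on quotients $[\CC^n/G_\delta]$ by finite abelian groups. For the localization isomorphism, one covers $X_\omega = [U_\omega/K]$ by $T$-invariant affine substacks and runs Mayer--Vietoris, reducing to these affine local models where the statement is a direct computation with characters of $T$. Both of these technical ingredients are established in \cite[Theorem~3.3]{CIJS}, whose argument I would essentially follow.
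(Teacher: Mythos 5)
Your proposal is correct, but note that the paper itself offers no proof of this statement: it is explicitly recalled from \cite[Theorem~3.3]{CIJS}, so there is nothing in the paper to diverge from. Your outline (invertibility of $\lambda_{-1}N_\delta^\vee$ from the nontriviality of the $T$-weights on the normal directions, Thomason-type localization to make $i_\star$ an isomorphism over $\on{Frac}(\ZZ[T])$, then base change plus the Koszul self-intersection formula to identify the coefficients) is the standard argument and is essentially how the cited reference proceeds; the only point worth flagging is that the sum should really run over \emph{minimal} anticones $\delta$, which is a looseness in the paper's statement rather than in your argument.
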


\subsection{The localized $K$-theory basis}

Consider the toric wall crossing diagram \eqref{eq:K-equivalence}. The torus $T$ acts on $X_\pm$ through the diagonal action of $T$ on 
$\CC^m$. There is an action of $T$ on $\tX$ induced from the inclusion $T=T\times \{1\}\subset T\times \CC^\times$ and the $T\times\CC^\times$ action on $\CC^{m+1}$. So all the maps in \eqref{eq:K-equivalence} are $T$-equivariant.
The $T$-equivariant K-groups $K_0^{T}(X_\pm)$, $K_0^{T}(\tX)$ are modules over $K_0^{T}(pt)=\ZZ[T]$.

From the wall crossing construction in \S \ref{sec:CRC:toric:DM:stacks}, 
there are  two types of minimal anticones for $\widetilde{X}$.  
The first type, called \emph{flopping type}, is given by 
$\widetilde{\delta}=(j_{1},\cdots, j_{r-1}, j_{+}, j_{-}),$
where $j_1,\cdots,j_{r-1}\in M_0$, and $j_{+}\in M_+, j_{-}\in M_{-}$. 
This type of minimal anticones induce the maps from the fixed point stack 
of $\tX$ to the fixed point stacks of $X_{+}$ and $X_{-}$ by
\[
 f_{+,\widetilde{\delta}}: x_{\widetilde{\delta}}\to x_{\delta_{+}}, \quad 
 f_{-, \widetilde{\delta}}: x_{\widetilde{\delta}}\to x_{\delta_{-}},
\]
where $\delta_{+}=(j_{1},\cdots, j_{r-1}, j_{+}, m+1)$ and 
$\delta_{-}=(j_{1},\cdots, j_{r-1}, j_{-}, m+1)$.
We use the following notations:
$\widetilde{\delta}|\delta_{\pm}$ means that the fixed point 
$x_{\widetilde{\delta}}$ maps to the fixed point $x_{\delta_{\pm}}$ corresponding to
flopping minimal anticone $\delta_{\pm}$ for $X_{\pm}$.  

The second type of minimal anticone, called \emph{nonflopping type}, 
is given by $\widetilde{\delta}$ containing the last, $m+1$-st, ray 
corresponding to the common blow-up. 
The nonflopping minimal anticones map isomorphically to minimal 
anticones of $X_{+}$ and $X_{-}$.
Such minimal anticones ($\widetilde{\delta}$ and $\delta_{\pm}$) are of the form
$(j_{1},\cdots, j_{r-2}, j_+, j_{-}, m+1)$.

The  $T$-invariant divisor $\{z_i=0\}$ on $X_\omega$  determines a $T$-equivariant line bundle
$\sO(\{z_i=0\})$ on $X_\omega$, and we denote the class of this line bundle in the 
$T$-equivariant $K$-theory by $R_i$. For  $K_0^{T}(X_\pm)$, $K_0^{T}(\tX)$ we write these  classes as:
\[
\begin{array}{ll}
 \{R^{-}_i| 1\leq i\leq m\}: & \mbox{for} ~K^{T}_0(X_{-});\\
 \{R^{+}_i| 1\leq i\leq m\}: & \mbox{for} ~K^{T}_0(X_{+});\\ 
\{\widetilde{R}_i| 1\leq i\leq m+1\}: & \mbox{for} ~K^{T}_0(\tX).\\ 
\end{array}
\]

From \S 6.3.2 in \cite{CIJ}, each character $p\in \mathbb{H}om(K,\CC^\times)=\LL^\vee$ define a   line bundle 
$L_-(p)$ over $X_-$.    This line bundle $L_{-}(p)$ is equipped with a $T$-linearized action, thus make it a 
$T$-equivariant line bundle. 
The line bundles 
$R_i^-=L_-(D_i)\otimes e^{\lambda_i}$, where  $e^{\lambda_i}$ is the standard $i$-th 
irreducible $T$-representation $T\to\CC^\times$. 
Similar construction works for the $K$-theory ring $K_0^T(X_+)$.

For a character $(p,n)\in \Hom(K\times \CC^\times,\CC^\times)=\LL^\vee\oplus \ZZ$ we define a $T$-equivariant line bundle 
$L(p,n)\to\tX$ ans we have:
$$\widetilde{R}_i=L(\tD_i)\otimes e^{\lambda_i}, (1\leq i\leq m); \quad \widetilde{R}_{m+1}=L(\tD_{m+1})=L(0,1).$$
The classes $L_\pm(X_\pm)$ (the classes $L(p,n))$) generate the equivariant $K$-group
$K^{T}_0(X_{\pm})$ ($K^{T}_0(\tX)$) over $\ZZ[T]$.

We describe the localized $T$-equivariant $K$-theory basis for $K_0^T(X_-)$.  Let 
$\delta_-\in\cA_-$ be a minimal cone and 
$x_{\delta_-}$ be the corresponding $T$-fixed point.  Let
$$i_{\delta_-}: x_{\delta_-}\to X_-$$
be the inclusion of the fixed point, and $G_{\delta_-}$ the isotropy group of $x_{\delta_-}$. We have
$x_{\delta_-}=BG_{\delta_-}$.  A basis for $K_0^T(X_-)$, after inverting nonzero elements of $\ZZ[T]$, is given by
\begin{equation}\label{K-basis-}
\{(i_{\delta_-})_{\star}\varrho: \varrho \mbox{~an irreducible representation of ~}G_{\delta_-}, \delta_-\in\cA_- \}
\end{equation}
Choose a lift $\hat{\varrho}\in\Hom(K,\CC^\times)=\LL^\vee$ of each $G_{\delta_-}$-representation $\varrho: G_{\delta_-}\to\CC^\times$, an element in 
(\ref{K-basis-}) can be written in the form:
$$e_{\delta_-,\varrho}:=L_-(\hat{\varrho})\prod_{i\notin\delta_-}(1-S_i^{-}).$$
Then $\{e_{\delta_-,\varrho}\}$ is a basis for the localized $T$-equivariant $K$-theory of $X_-$. There is a similar basis
$\{e_{\delta_+,\varrho}\}$
for the localized $T$-equivariant $K$-theory of $X_+$.

\subsection{The Bondal-Orlov functors}

The general Bondal-Orlov  functor
on the bounded derived categories $D^b_{T}(X_\pm)$:

$$\mathbb{BO}_k:  D^{b}_{T}(X_{-})\to D^b_{T}(X_{+})$$ 
is defined by:
$$\mathbb{BO}_k(\alpha)=(f_{+})_{\star}(\sO_{\tX}(kE)\otimes (f_{-})^{\star}(\alpha)).$$

We consider the induced functor on the $K$-theory of $X_\pm$: 
$$\mathbb{BO}_k:  K_0^{T}(X_{-})\to K_0^{T}(X_{+}).$$ 
We explicitly calculate $\mathbb{BO}_k$ in terms of the  localized $T$-equivariant $K$-theory basis for $X_-$. 
Let
$$S_i^{+}:=(R_{i}^{+})^{-1}, \quad  S_{i}^{-}:=(R_{i}^{-})^{-1},  \quad \widetilde{S}_{i}:=(\widetilde{R}_{i})^{-1},$$  
and let 
$$k_i:=\mbox{max}(D_i\cdot e, 0), \quad  l_i:=\mbox{max}(-D_i\cdot e, 0).$$

\begin{theorem}\label{theorem:GFM}
Let $\delta_-\in\cA_-$ be a minimal anticone such that $\delta_-\in\cA_+$, then 
$\mathbb{BO}_k(e_{\delta_-,\varrho})=e_{\delta_-,\varrho}$, where on the right side $\delta_-$ is taken as a minimal anticone in $\cA_+$;
If $\delta_-\in\cA_-$ is a minimal anticone such that $\delta_-\notin\cA_+$, then 
\begin{align*}
&\mathbb{BO}_k(e_{\delta_-,\varrho})=\\
&\frac{1}{l}\sum_{t\in\mathcal{T}}\left(\frac{t^k(1-S_{j_-}^{+})}{1-t^{-1}}\cdot L_+(\hat{\varrho})t^{\hat{\varrho}\cdot e}\cdot
\prod_{\substack{j\notin\delta_- \\
D_j\cdot e<0}}(1-S_j^{+})\cdot \prod_{\substack{i\notin\delta_- \\
D_i\cdot e\geq 0}}(1-t^{-D_i\cdot e}S_i^{+}) \right)
\end{align*}
where $j_-\in\delta_-$ is the unique element such that $D_{j_-}\cdot e<0$,
$l=-D_{j_-}\cdot e$ and 
$$\mathcal{T}:=\{\zeta\cdot (R_{j_-}^{+})^{\frac{1}{l}}: \zeta\in\mu_l\}.$$
\end{theorem}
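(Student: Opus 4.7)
The plan is to follow the proof of Theorem~6.19 of \cite{CIJ} and track the one new ingredient: the line bundle $\sO_{\tX}(kE)$. From the construction of $\tX$ in \S\ref{sec:CRC:toric:DM:stacks}, the $(m+1)$-st toric ray, corresponding to the character $\tD_{m+1}=(0,1)$, is exactly the exceptional divisor contracted by $f_{\pm}$, so $\sO_{\tX}(kE)=\widetilde{R}_{m+1}^{\,k}$ in $K^{T}_0(\tX)$. After pulling $e_{\delta_-,\varrho}$ back via $f_-^{\star}$ and tensoring with $\widetilde{R}_{m+1}^{\,k}$, the pushforward by $f_+$ must then be matched to the claimed class.

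The comparison is carried out after localization. By Theorem~\ref{theorem:localization} the localized equivariant $K$-group $K^{T}_0(X_+)\otimes_{\ZZ[T]}\on{Frac}(\ZZ[T])$ is determined by restrictions to $T$-fixed points, so it suffices to check $i_{\delta_+}^{\star}$ of both sides for every $\delta_+\in\cA_+$. By the projection formula together with the pushforward-localization identity along $f_+$,
\begin{equation*}
i_{\delta_+}^{\star}\mathbb{BO}_k(e_{\delta_-,\varrho})
=\sum_{\widetilde{\delta}\mid \delta_+}
\frac{i_{\widetilde{\delta}}^{\star}\bigl(\widetilde{R}_{m+1}^{\,k}\otimes f_-^{\star}e_{\delta_-,\varrho}\bigr)}{\lambda_{-1}N^{\vee}_{f_+,\widetilde{\delta}}},
\end{equation*}
where $\widetilde{\delta}$ ranges over $T$-fixed points of $\tX$ lying over $x_{\delta_+}$, of both flopping and nonflopping type, and $N_{f_+,\widetilde{\delta}}$ denotes the virtual normal bundle of $f_+$ at $x_{\widetilde{\delta}}$. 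Each restriction $i_{\widetilde{\delta}}^{\star}\widetilde{R}_i$ and $i_{\widetilde{\delta}}^{\star}L(p,n)$ is a monomial in $T$-characters determined by \eqref{e:2.1} and the explicit $\tD_i$, so every term is in closed form.

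In the easy case $\delta_-\in\cA_-\cap\cA_+$ the birational map $\varphi$ is a local isomorphism at $x_{\delta_-}$; the only $\widetilde{\delta}$ over the corresponding $x_{\delta_+}$ is the nonflopping anticone containing $m+1$, on which $i_{\widetilde{\delta}}^{\star}\widetilde{R}_{m+1}=1$ by \eqref{e:2.1}, and the formula collapses to $\mathbb{BO}_k(e_{\delta_-,\varrho})=e_{\delta_-,\varrho}$. In the substantive case $\delta_-\notin\cA_+$, the unique $j_-\in\delta_-$ with $D_{j_-}\cdot e<0$ exists, and the fiber of $f_+$ over the relevant $x_{\delta_+}$ has isotropy $\mu_l$ with $l=-D_{j_-}\cdot e$; the $\mu_l$-weight of the $(m+1)$-st coordinate forces $i_{\widetilde{\delta}}^{\star}\widetilde{R}_{m+1}=t$ with $t^{l}=R_{j_-}^{+}$. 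This is the origin both of the factor $t^k$ in the statement and of the indexing set $\mathcal{T}=\{\zeta(R_{j_-}^{+})^{1/l}:\zeta\in\mu_l\}$ that arises after $\mu_l$-averaging.

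The main obstacle is then to perform the sum $\sum_{\widetilde{\delta}\mid\delta_+}$ and reorganize it as the $\mu_l$-average $\frac{1}{l}\sum_{t\in\mathcal{T}}$ displayed in the theorem. Substituting the explicit characters, the Euler denominator $\lambda_{-1}N^{\vee}_{f_+,\widetilde{\delta}}$ produces $(1-t^{-1})\prod_{i\notin\delta_-,\,D_i\cdot e\geq 0}(1-t^{-D_i\cdot e}S_i^{+})$; the pullback $f_-^{\star}e_{\delta_-,\varrho}$ supplies $L_+(\hat{\varrho})\,t^{\hat{\varrho}\cdot e}(1-S_{j_-}^{+})\prod_{j\notin\delta_-,\,D_j\cdot e<0}(1-S_j^{+})$; the factor $\widetilde{R}_{m+1}^{\,k}$ contributes $t^k$; and the $l$ distinct flopping branches above $\delta_+$ are reindexed by $\mu_l$. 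Matching the resulting expression term by term against the localized value of the right-hand side of the theorem completes the proof. Since the $k=0$ case is exactly Theorem~6.19 of \cite{CIJ}, the only genuinely new work is the faithful propagation of the scalar $t^k$ through this calculation, which is immediate once the $\mu_l$-weight of $\widetilde{R}_{m+1}$ at the flopping fixed points has been identified.
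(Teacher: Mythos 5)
Your proposal is correct and follows essentially the same route as the paper: identify $\sO_{\tX}(kE)$ with $\widetilde{R}_{m+1}^{\,k}$, pull back $e_{\delta_-,\varrho}$ via $f_-^\star$ using the explicit formula from \cite[Proposition 6.21]{CIJ}, localize to the fixed points of $\tX$ over $x_{\delta_-}$, and push forward along $f_+$ via the $\mu_l$-averaging identity (Proposition 6.22(3) of \cite{CIJ}), with the only new ingredient being the factor $t^k$ coming from $i_{\widetilde{\delta}}^\star\widetilde{R}_{m+1}=t$. The paper organizes this as a global manipulation followed by a vanishing check at fixed points in $\cA_+\cap\cA_-$, whereas you compare restrictions fixed point by fixed point, but the computational content is identical.
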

\begin{proof}
The proof is similar to the proof of Theorem 6.19 in \cite{CIJ}, except that we take into account of the role of the line bundle 
$\sO_{\tX}(kE)$.  The line bundle $\sO_{\tX}(E)$ corresponds to the line bundle $\widetilde{R}_{m+1}$ over $\tX$. So 
$$\sO_{\tX}(kE)\cong \widetilde{R}_{m+1}^{\otimes k}.$$

We calculate $\mathbb{BO}_k$ for any $k\in\ZZ$.  
For $\delta_-\in\cA_\pm$, $\varphi$ is an isomorphism in an neighbourhoods of the fixed points of $x_{\delta_-}\in X_\pm$. So 
$\mathbb{BO}_k(e_{\delta_-,\varrho})=e_{\delta_-,\varrho}$.

Suppose now that $\delta_-\in\cA_-$, but $\delta_-\notin\cA_+$. Let 
$\delta_-=\{j_1,\cdots, j_{r-1}, j_-\}$. Then 
$D_{j_1}\cdot e=D_{j_2}\cdot e=\cdots=D_{j_{r-1}}\cdot e=0$ and $D_{j_-}\cdot e<0$. 
We have from \cite[Proposition 6.21]{CIJ}, 
$$(f_-)^\star(e_{\delta_-,\varrho})=L(\hat{\varrho},0)\prod_{i\notin\delta_-}(1-\widetilde{S}_{m+1}^{k_i}\widetilde{S}_i).$$
Then 
$$\sO_{\tX}(kE)\otimes (f_-)^\star(e_{\delta_-,\varrho})=\widetilde{R}_{m+1}^{k}\cdot L(\hat{\varrho},0)\prod_{i\notin\delta_-}(1-\widetilde{S}_{m+1}^{k_i}\widetilde{S}_i).$$
We use the localized Theorem \ref{theorem:localization} in the $T$-equivariant $K$-theory restricting above to all torus fixed points 
$x_{\widetilde{\delta}}\in f_-^{-1}(x_{\delta_-})$, where $\widetilde{\delta}=\delta_-\cup\{j_+\}$ for $D_{j_+}\cdot e>0$.
So
\begin{equation}\label{localization-deltatilde}
\sO_{\tX}(kE)\otimes (f_-)^\star(e_{\delta_-,\varrho})=\sum_{\widetilde{\delta}\in\widetilde{\cA}}
(i_{\widetilde{\delta}})_{\star}(i_{\widetilde{\delta}})^\star\Big[ \frac{\widetilde{R}_{m+1}^{k}\cdot L(\hat{\varrho},0)\cdot 
\prod_{i\notin \delta_-}(1-\widetilde{R}_{m+1}^{k_i}\widetilde{S}_i)}{(1-\widetilde{S}_{m+1})\prod_{\substack{j\notin\delta_-\\
j\neq j_+}}(1-\widetilde{S}_j)} \Big]
\end{equation}
For $j_+\in\widetilde{\delta}$, $\widetilde{R}_{j_+}$ is trivial when restricted to $x_{\widetilde{\delta}}$. So:
$(1-\widetilde{\delta}_{m+1}^{k_i}\widetilde{S}_{j_+})=(1-\widetilde{\delta}_{m+1}^{k_i})$ and (\ref{localization-deltatilde}) is actually a polynomial on $\widetilde{R}_{m+1}$ on the numerator. Then applying the pushforward
\begin{align*}
&(f_+)_{\star}(\sO_{\tX}(kE)\otimes (f_-)^\star(e_{\delta_-,\varrho}))\\
&=\sum_{\delta_+: \delta_+|\delta_-}
(i_{\delta_+})_{\star}(f_{+,\widetilde{\delta}})_\star (i_{\widetilde{\delta}})^\star\Big[ \frac{\widetilde{R}_{m+1}^{k}\cdot L(\hat{\varrho},0)\cdot 
\prod_{i\notin \delta_-}(1-\widetilde{R}_{m+1}^{k_i}\widetilde{S}_i)}{(1-\widetilde{S}_{m+1})\prod_{\substack{j\notin\delta_-\\
j\neq j_+}}(1-\widetilde{S}_j)} \Big] \\
&=\sum_{\delta_+: \delta_+|\delta_-}
(i_{\delta_+})_{\star} (i_{\delta_+})^\star\Big[\frac{1}{l}\sum_{t\in\mathcal{T}}\frac{t^{k}\cdot L_+(\hat{\varrho})\cdot t^{\hat{\varrho}\cdot e}
\prod_{i\notin \delta_-}(1-t^{l_i-k_i}S_i^+)}{(1-t^{-1})\prod_{\substack{j\notin\delta_-\\
j\neq j_+}}(1-t^{l_j}S^+_j)} \Big] 
\end{align*}
here we use the formula (3) in Proposition 6.22 of \cite{CIJ}. Hence we get:
\begin{align*}
&(f_+)_{\star}(\sO_{\tX}(kE)\otimes (f_-)^\star(e_{\delta_-,\varrho})) \\
&=
\sum_{\delta_+: \delta_+|\delta_-}
(i_{\delta_+})_{\star} (i_{\delta_+})^\star\Big[\frac{\frac{1}{l}\sum_{t\in\mathcal{T}}\frac{t^{k}\cdot (1-S_{j_-}^+)}{1-t^{-1}}\cdot L_+(\hat{\varrho})\cdot t^{\hat{\varrho}\cdot e}
\prod_{i\notin \delta_-}(1-t^{-k_i}S_i^+)}{\prod_{\substack{j\notin\delta_+\\
j\neq j_+}}(1-S^+_j)} \Big] 
\end{align*}
By localization again we get the result in the Theorem. The only thing we need to check is that 
$t^{k}\cdot (1-S_{j_-}^+)\cdot L_+(\hat{\varrho})\cdot t^{\hat{\varrho}\cdot e}
\prod_{i\notin \delta_-}(1-t^{-k_i}S_i^+)$ vanishes on $x_\delta$ for $\delta\in\cA_{+}\cap\cA_-$. 
But this is a similar check as in the proof of Theorem 6.19 of \cite{CIJ}.
\end{proof}

\begin{remark}
In Theorem 6.23 of \cite{CIJ}, we prove that $\mathbb{BO}_0$ actually matches the analytic continuation of $I$-functions of $X_\pm$.  Since the $I$-functions of $X_\pm$ determine the bid quantum cohomology for $X_\pm$, the result of Theorem 6.23 in \cite{CIJ} tells us that the Fourier--Mukai transform preserves the big quantum cohomology of a single toric wall crossing. 
It is of course interesting to see if the general Bondal-Orlov transforms  $\mathbb{BO}_k$ preserves some analytic continuation of hypergeometric function of $X_\pm$. 
\end{remark}

\section{Derived equivalence and spherical twists}\label{derived:equivalence}

In this section we recall some facts that the general Bondal-Orlov functors give  equivalences on the bounded  derived categories. 

\subsection{Derived equivalence}
Let $Q:=T/K$ be the quotient torus since $K\subset T$ is a subtorus.  Both $X_+$ and $X_-$ carry  effective actions of $Q$. 
In this section we prove the following:
\begin{theorem}
Let (\ref{eq:K-equivalence}) be a toric crepant transformation. Then 
$$\mathbb{BO}_k: D_{Q}^b(X_-)\to D_{Q}^b(X_+)$$
gives an equivalence on the equivariant  bounded derived categories. 
\end{theorem}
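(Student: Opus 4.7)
The plan is to follow \cite[\S 5]{CIJS} essentially verbatim, the only change being that we work with a shifted window rather than the standard one. Recall the setup: $X_+$ and $X_-$ are both GIT quotients $[U_\pm/K]$ of the common prequotient $[\CC^m/K]$ with residual $Q$-action, and the wall separating $C_\pm$ is determined by the one-parameter subgroup $\Cstar \to K$ corresponding to $e \in \LL$. Let $Z \subset \CC^m$ denote the fixed locus of this $\Cstar$ and set
$$
\eta \;=\; \sum_{i \in M_+} D_i \cdot e \;=\; -\!\!\sum_{i \in M_-} D_i \cdot e,
$$
the two sums being equal by the crepancy condition $\sum_i D_i \in W$.

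First, by the window-shift theorem of \cite{HLS} (see also \cite{DS}, \cite{ADM}), for each integer $k$ there is a window subcategory
$$
\mathcal{W}_k \;\subset\; D^{b}_{Q}([\CC^m/K])
$$
consisting of $Q$-equivariant complexes whose restriction to $Z$ has $e$-weights in $[k, k+\eta) \cap \ZZ$, and the two restriction functors
$$
\rho_\pm^{(k)}: \mathcal{W}_k \xrightarrow{\;\sim\;} D^{b}_{Q}(X_\pm)
$$
are both equivalences of triangulated categories. The composition $\Phi_k := \rho_+^{(k)} \circ (\rho_-^{(k)})^{-1}$ is tautologically an equivalence $D^{b}_{Q}(X_-) \to D^{b}_{Q}(X_+)$.

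Second, one identifies $\Phi_k$ with $\mathbb{BO}_k$ using the common blow-up $\tX$. By the construction reviewed in \S\ref{sec:CRC:toric:DM:stacks}, $\tX$ is itself a GIT quotient of $[\CC^{m+1}/(K \times \Cstar)]$, and $f_\pm: \tX \to X_\pm$ are the toric blow-downs associated with the wall-crossings $\widetilde{C} \rightsquigarrow \widetilde{C}_\pm$. The line bundle $\sO_{\tX}(E)$ corresponds to the character $\tD_{m+1} = (0,1)$, and a direct weight computation shows that for $\alpha \in D^{b}_{Q}(X_-)$ the object $\sO_{\tX}(kE) \otimes (f_-)^\star \alpha$ descends to an object of $\mathcal{W}_k$ on the prequotient whose restrictions to $X_-$ and $X_+$ recover $\alpha$ and $(f_+)_\star(\sO_{\tX}(kE) \otimes (f_-)^\star \alpha) = \mathbb{BO}_k(\alpha)$ respectively. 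Hence $\Phi_k \simeq \mathbb{BO}_k$, which gives the theorem.

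The main obstacle is this last identification. The case $k = 0$ is exactly what is treated in \cite[\S 5]{CIJS}; for general $k$, what needs to be checked is that tensoring by $\sO_{\tX}(kE)$ shifts $e$-weights on $Z$ by precisely $k$. This is a bookkeeping exercise with the extended GIT characters $\tD_1, \ldots, \tD_{m+1}$ together with the identification of $E$ as the exceptional divisor of each $f_\pm$, so that $\sO_{\tX}(kE)$ pulls the window $[0,\eta)$ forward to $[k, k+\eta)$. Once this weight-shift compatibility is in place, the rest of the argument is a word-for-word reproduction of \cite[\S 5]{CIJS} with the standard window replaced by its $k$-shifted version.
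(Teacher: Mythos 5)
Your proposal is correct and follows essentially the same route as the paper: both arguments run the window/grade-restriction machinery of Halpern-Leistner--Shipman exactly as in \cite[\S 5]{CIJS}, with the single new ingredient being that tensoring by $\sO_{\tX}(kE)$ (which has $e$-weight $k$) shifts the window $[0,\eta)$ to $[k,k+\eta)$, so that $\mathbb{BO}_k$ factors through a chain of window equivalences. The only cosmetic difference is that the paper sets up the windows on the $\omega_0$-semistable locus $[U_0/K]$ and records the intermediate categories $\mathbf{H}$, $\widetilde{\mathbf{F}}$, $(\pi_-)^\star\mathbf{G}_k$ explicitly in a commutative diagram, rather than on all of $[\CC^m/K]$ as you write.
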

\begin{remark}
We use the same proof as in \cite[\S 5]{CIJS}, which uses the idea of Halpern-Leistner \cite{HL} and Halpern-Leistner-Shipman \cite{HLS}.
\end{remark}

\begin{proof}
We mainly follow the construction and notations in \S 5 of \cite{CIJS}.  First we recall the variation of the GIT quotients of $X\pm$ and $\tX$.  They correspond to chambers $\widetilde{C}_\pm, \widetilde{C}$ inside $(\LL^\vee\oplus\ZZ)\otimes \RR$. 
We denote by the walls by 
$W_{+|-}, W_{+|\sim}, W_{-|\sim}$ respectively.  Let
$$W_0=W_{+|-}\cap W_{+|\sim}\cap W_{-|\sim}.$$
There are $7$ stability conditions on $W_0, \widetilde{C}_\pm, \widetilde{C}, W_{+|-}, W_{+|\sim}, W_{-|\sim}$ respectively. 
If we let $V_0\subset \CC^{m+1}$ be the semi-stable locus of $W_0$, then 
$$V_0=U_0\times \CC=\CC^{m+1}\setminus \left(\cup_{I\notin\cA_0}\CC^{I}\times\CC\right)$$
where $U_0$ is in \S \ref{sec:CRC:toric:DM:stacks}. As in \cite{CIJS}, the other $6$ stability conditions are as follows:
\begin{center}
  \begin{tabular}{ll}
    \multicolumn{1}{c}{Location of  stability condition} & \multicolumn{1}{c}{Semi-stable locus}   \\
    \hspace{2.25cm}$\widetilde{C}_+$ 	& \hspace{2ex}	$V_+ = V_0\setminus \left((\CC^{M_{\leq 0}}\times \CC)\cup \CC^m\right)	$		\\
    \hspace{2.25cm}$\widetilde{C}_-$ 	& \hspace{2ex}	$V_- = V_0\setminus \left((\CC^{M_{\geq 0}}\times \CC)\cup \CC^m\right)	$				\\
    \hspace{2.25cm}$\widetilde{C}$ 	& \hspace{2ex}	$V_{\sim} = V_0\setminus \left((\CC^{M_{\leq 0}}\times \CC)\cup (\CC^{M_{\geq 0}}\times \CC)\right)$	\\
    \hspace{2.25cm}$W_{+|-}$		& \hspace{2ex}	$V_{+|-} = V_0\setminus \CC^m$		\\
    \hspace{2.25cm}$W_{+|\sim}$ 	& \hspace{2ex}	$V_{+|\sim}= V_0\setminus (\CC^{M_{\leq 0}}\times \CC)$		\\
    \hspace{2.25cm}$W_{-|\sim}$ 	& \hspace{2ex}	$V_{-|\sim} = V_0\setminus (\CC^{M_{\geq 0}}\times \CC)$			
  \end{tabular}
\end{center}
We have the  GIT quotients 
$$X_+=\big[V_+ /K\big],$$
$$X_-=\big[V_- /K\big],$$
$$\tX=\big[V_{\sim} /K\big].$$

Now we recall the $KN$ stratum introduced in \cite[\S 5.1]{CIJS}. A $KN$ stratum $(\lambda, Z, S)$ contains a one-parameter subgroup 
$\lambda\subset K\times\CC^\times$, a connected component $Z$ of the fixed locus, and the associated blade $S$ defined as:
$$S=\{y\in\CC^{m+1}: \lim_{t\to \infty}\lambda(t)(y)\in Z\}.$$ 
To a $KN$ stratum, there is a numerical invariant
$$\eta=\mbox{Weight}_{\lambda}(\mbox{det}(N_{S/\CC^{m+1}})).$$
In our cases let 
$$d:=\sum_{i\in M_+}D_i\cdot e=-\sum_{i\in M_-}D_i\cdot e$$
and consider the $KN$-strata:
$$((e,1), \CC^{M_{\geq 0}}\cap V_{+|\sim}, \CC^m\cap V_{+|\sim}),  \quad \eta=1$$
and 
$$((-e,-1), \CC^{M_{\geq 0}}\cap V_{+|\sim}, \CC^m\cap V_{+|\sim}), \quad  \eta=d$$
Then $V_+$ and $V_{\sim}$ are open subsets of $V_{+|\sim}$, which are the complements of the above $KN$ strata. 
Then from \cite[\S 5]{CIJS} and \cite{HL}, let 
$$\mathbf{F}\subset \widetilde{\mathbf{F}}\subset D^b_{T\times\CC^\times}(V_{+|\sim})$$
be the subcategories by imposing the grade-restriction rule 
on the subvariety $\CC^{M_{\geq 0}}\cap V_{+|\sim}$, where for  $\mathbf{F}$ we require that the $(e,1)$-weights lie in $[0,1)$, and for 
$\widetilde{\mathbf{F}}$ we we require that the $(e,1)$-weights lie in $[0,d)$.  The we have the following diagram: 
\[
\xymatrix{
&\widetilde{\mathbf{F}}\ar[dl]_{\cong}\ar[dr]\\
D^b_{Q}(\tX)\ar[rr]^{(f_+)_{\star}}&&D^b_{Q}(X_+)
}
\]
and the diagonal map is the restriction of functors.  Similarly, take $V_-$ as an open subset of $V_{-|\sim}$ and taking into account of the $KN$-stratum:
$$((0,1), \CC^{M_{\leq 0}}\cap V_{-|\sim}, \CC^m\cap V_{-|\sim})$$
which has numerical invariant $\eta=1$. There is a subcategory
$$\mathbf{H}\subset D^b_{T\times\CC^\times}(V_{-|\sim})$$
such that the $(0,1)$-weights lie in $[0,1)$. We have the commuting triangle:
\[
\xymatrix{
&\mathbf{H}\ar[dl]_{\cong}\ar[dr]\\
D^b_{Q}(X_-)\ar[rr]^{(f_-)^{\star}}&&D^b_{Q}(\tX)
}
\]
and the diagonal map is the restriction of functors

Let us recall the definition of the functor 
$\mathbb{GR}_k$ for each integer $k\in\ZZ$ in \cite[\S 5.1]{CIJS}. Note that \cite{CIJS} only discusses the case $\mathbb{GR}_0$, but general $\mathbb{GR}_k$ are similar.  For the $KN$ stratum $(e,Z,S_-)$ with numerical invariant $\eta_+=\sum_{i\in M_+}D_i\cdot e$, 
$Z=U_0\cap \CC^{M_0}$ and $S_-=U_0\cap \CC^{M_{\leq 0}}$, where 
the toric DM stack $X_+=[(U_0\setminus S_-)/K]$, 
there exists a subcategory
$$\mathbf{G}_k\subset D^b_{T}(U_0)$$
using the grade restriction rule and requiring the $e$-weights lying in $[k,k+\eta_+)$.  We have
$\mathbf{G}_k\cong D^b(X_+)$. 

On the other hand, for the $KN$ stratum $(-e,Z,S_+)$ with numerical invariant $\eta_-=-\sum_{i\in M_-}D_i\cdot e$, 
$Z=U_0\cap \CC^{M_0}$ and $S_+=U_0\cap \CC^{M_{\geq 0}}$, where the toric DM stack $X_-=[(U_0\setminus S_+)/K]$,  there exists a subcategory
$$\mathbf{G}_k\subset D^b_{T}(U_0)$$
using the grade restriction rule and requiring the $(-e)$-weights lying in $[-\eta_-+k+1, k+1)$.  Then we have
$\mathbf{G}_k\cong D^b(X_-)$. 
Thus the functor $\mathbb{GR}_k: D^b_{Q}(X_-)\to D^b_{Q}(X_+)$
 are defined by the diagram:
\[
\xymatrix{
&\mathbf{G}_k\ar[dl]_{\cong}\ar[dr]^{\cong}\\
D^b_{Q}(X_-)\ar[rr]^{}&&D^b_{Q}(X_+)
}
\] 
by inverting the right isomorphism.  

Consider the subcategory
$(\pi_-)^\star \mathbf{G}_0\subset D^b_{T\times\CC^\times}(V_0)$, where 
$$\pi_-: \Big[V_0\Big/(T\times\CC^\times)\Big]\to [U_0/T]$$ is the natural morphism. 
Under the restriction functor from 
$V_0\to V_{+|\sim}$, the subcategory $(\pi_-)^\star \mathbf{G}_0$ maps  to $\widetilde{\mathbf{F}}$. 
Under the restriction functor from 
$V_0\to V_{-|\sim}$, the subcategory $(\pi_-)^\star \mathbf{G}_0$ maps  to $\mathbf{H}$, which is an isomorphism. 

The line bundle $\sO_{\tX}(kE)\to \tX$ corresponds to an $T\times\CC^\times$-equivariant line bundle $\mathcal{L}_k$ on $\CC^{m+1}$. Let 
$$\otimes\mathcal{L}_k: D^b_{Q}(\tX)\to D^b_{Q}(\tX)$$
be the tensor product morphism. Then since the line bundle has $e$-weight $k$, the tensor product sends 
$(\pi_-)^\star \mathbf{G}_0$ to $(\pi_-)^\star \mathbf{G}_k$.  We have the following modified diagram as for the last diagram in \cite{CIJS}:
\[ 
\xymatrix{ & & (\pi_-)^\star\mathbf{G}_0\ar[dl]_{\simeq}\ar[r]^{\otimes \mathcal{L}_k} &(\pi_-)^\star \mathbf{G}_k \ar[dr] & & \\
& \mathbf{H} \ar[dl]_{\simeq} \ar[dr] & && \widetilde{\mathbf{F}} \ar[dl]_{\simeq} \ar[dr] & \\
D^b_Q(X_-) \ar[rr]^{(f_-)^*} & & D^b_Q(\tilde{X})\ar[r]^{\otimes \mathcal{L}_k}&D^b_Q(\tilde{X}) \ar[rr]^{(f_+)_\star} & & D^b_Q(X_+)}
\]
The result is easily seen from the above diagram since the bottom represents the Bondal-Orlov functor $\mathbb{BO}_k$. 
\end{proof}

\subsection{The spherical twist}

Let us fix a single toric wall crossing (\ref{eq:K-equivalence}).  We first classify the exceptional locus of the contractions 
$g_\pm$.  

For $g_+: X_+\to \overline{X}_0$, let 
$$\LL^\vee_{\ex}:=\LL^\vee/\langle D_i: i\in M_0\rangle$$
and let $p: \LL^\vee\to \LL^\vee_{\ex}$ be the projection. Then  $p: \LL^\vee\otimes\RR\to \LL^\vee_{\ex}\otimes\RR$ is the projection to the vector spaces.  Let $\omega^{+}_{\ex}=p(\omega_+)$ be the image of the stability condition $\omega_+$.  
The lattice $\LL^\vee_{\ex}$, which is rank one,  may have torsion in general.  In this section we assume that $\langle D_i: i\in M_0\rangle$ generate the lattice wall $W\cap\LL^\vee$. Then $\LL^\vee_{\ex}\cong \ZZ$.  
The elements $D_i\in \LL^\vee$ have images 
$p(D_i)=D_i\cdot e\in\LL^\vee_{\ex}$. So only $D_i$ for $i\in M_{\pm}$ survive  Hence we get the GIT data on $\LL^\vee_{\ex}$:
\begin{itemize} 
\item $K \cong \Cstar$, a connected torus of rank $1$; 
\item $\LL_{\ex}= \Hom(\Cstar,K)$; 
\item $D_1\cdot e,\ldots,D_m\cdot e \in \LL_{\ex}^\vee = 
\Hom(K,\Cstar)$, characters of $K$;
\item stability condition $\omega_{\ex}^+\in  \LL_{\ex}^{\vee}\otimes\RR$;
\item $\cA_{\omega^+_{\ex}}= 
    \big\{ I \subset \{1,2,\ldots,m\} : D_i\cdot e>0, i\in I \big\}$.
\end{itemize} 
Let $a_i:=D_i\cdot e$ for $D_i\cdot e>0$ and $\mathbf{a}=(D_i\cdot e: D_i\cdot e>0)$.
\begin{proposition}
The corresponding toric DM stack $X_{\omega_{\ex}^+}$ associated with the above GIT data is the weighted projective stack $\mathbb{P}(\mathbf{a})$.  Moreover, the map  $g_+: X_+\to \overline{X}_0$ always contracts the weighted projective stack $X_{\omega_{\ex}^+}=\mathbb{P}(\mathbf{a})$.
\end{proposition}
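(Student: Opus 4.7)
The first claim, $X_{\omega_{\ex}^+}\cong \mathbb{P}(\mathbf{a})$, is essentially by definition of the weighted projective stack as a GIT quotient. Since $K\cong \CC^\times$, the characters $D_i\cdot e$ with $i\in M_0$ are trivial and may be discarded; the remaining characters are the $a_i>0$ for $i\in M_+$ and $-l_i<0$ for $i\in M_-$. The positive stability condition $\omega_{\ex}^+$ selects exactly the anticones $I\subset M_+$, so the (reduced) semi-stable locus is $\CC^{|M_+|}\setminus\{0\}$ on which $\CC^\times$ acts with weights $\mathbf{a}$, and the GIT quotient is $\mathbb{P}(\mathbf{a})$ by definition.

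For the ``moreover'' part I would identify the exceptional locus of $g_+: X_+\to\overline{X}_0$ by direct inspection. The map $g_+$ is induced by the open immersion $U_+\hookrightarrow U_0$ followed by the affine quotient $U_0\to U_0/\!\!/ K=\overline{X}_0$, and its exceptional locus is $[\,\{z\in U_+ : z_i=0\ \forall i\in M_-\}/K\,]$: for such $z$ the $\CC^\times_e$-orbit flows into the fixed locus $\CC^{M_0}\cap U_0$, which is collapsed in $\overline{X}_0$. Under the hypothesis that $\langle D_i : i\in M_0\rangle$ spans $W\cap\LL^\vee$, the torus $K$ decomposes (up to a finite kernel) as $K_{M_0}\times\CC^\times_e$, with $K_{M_0}$ acting faithfully on $\CC^{M_0}$ and $\CC^\times_e$ acting on $\CC^{M_+}$ with weights $\mathbf{a}$. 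Quotienting the slice $\{z_i=0,\ i\in M_-\}$ first by $K_{M_0}$ absorbs the $\CC^{M_0}$-factor, and the residual quotient $[\,\CC^{|M_+|}\setminus\{0\}/\CC^\times_e\,]$ is precisely $\mathbb{P}(\mathbf{a})$ by the first part.

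The main technical point is this rank-one splitting of $K$ and the verification that quotienting by $K_{M_0}$ contracts the $\CC^{M_0}$-factor to a point. This is precisely where the hypothesis that $\{D_i : i\in M_0\}$ generates the wall lattice $W\cap\LL^\vee$ is needed; without it the exceptional locus would be a $\mathbb{P}(\mathbf{a})$-bundle over a nontrivial toric base rather than a single copy of $\mathbb{P}(\mathbf{a})$. A secondary check is that $\omega_{\ex}^+=p(\omega_+)$ lies in the positive chamber of the reduced data, which is automatic since $\omega_+\cdot e>0$ by the choice of $e$.
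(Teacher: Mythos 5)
Your proof of the first statement is fine and is essentially what the paper does (the paper simply says it is ``easily seen from the GIT data''); your observation that the hypothesis on $\langle D_i : i\in M_0\rangle$ is what guarantees $\LL^\vee_{\ex}\cong\ZZ$, so that the quotient is honestly $\mathbb{P}(\mathbf{a})$ rather than a gerbe over it, is a correct and worthwhile elaboration.

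For the ``moreover'' part, however, your argument proves too much and in doing so makes a false intermediate claim. You assert that, under the lattice-generation hypothesis, quotienting the slice $\{z_i=0,\ i\in M_-\}$ by $K_{M_0}$ ``absorbs the $\CC^{M_0}$-factor'' so that the entire exceptional locus is a single copy of $\mathbb{P}(\mathbf{a})$. That hypothesis only controls the torsion of $\LL^\vee_{\ex}=\LL^\vee/\langle D_i : i\in M_0\rangle$; it says nothing about the dimension of $[(\CC^{M_0}\cap U_0)/(K/\Cstar_e)]$, which is a point only when the relevant non-simplicial cone is full-dimensional (equivalently, when the $\CC^{M_0}$-directions are exhausted by a single minimal anticone of size $r-1$). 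A concrete counterexample to your claim: take the resolved conifold times $\CC$, i.e.\ $K=\Cstar$ acting on $\CC^5$ with weights $(1,1,-1,-1,0)$. Here $W\cap\LL^\vee=0$ is (vacuously) generated by $D_5=0$, yet the locus you identify is $[(\CC^2\setminus\{0\})\times\{0\}^2\times\CC/\Cstar]=\mathbb{P}^1\times\CC$, a $\mathbb{P}(\mathbf{a})$-bundle over $\CC$, not a single $\mathbb{P}^1$; your $K_{M_0}=K/\Cstar_e$ is trivial and absorbs nothing. The proposition itself survives because it only claims that \emph{a} copy of $\mathbb{P}(\mathbf{a})$ is contracted, and this is how the paper argues: it works downstairs, identifying the torus-fixed point of $\overline{X}_0$ corresponding to the full-dimensional non-simplicial cone spanned by the rays with $i\in M_\pm$ (guaranteed to exist by condition (C2)), and observes that the fiber of $g_+$ over that point is $\mathbb{P}(\mathbf{a})$. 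Your staged-quotient computation is salvageable and in fact more explicit than the paper's, but you must restrict it to that fiber --- i.e.\ fix the $\CC^{M_0}$-coordinates at the torus-fixed value --- rather than claim the whole exceptional locus is $\mathbb{P}(\mathbf{a})$.
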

\begin{proof}
The first statement is easily seen from the GIT data. For the second statement, look at the map
$$g_+: X_+=[U_+/K]\to \overline{X}_0=\overline{[U_0/K]},$$
where $U_+=U_0\setminus (\CC^{M_{\leq 0}}\cap U_0)$.  
The torus $\CC^\times$-fixed points on $\overline{X}_0=\overline{[U_0/K]}$ corresponds to nonsimplicial cones, which are spanned by 
rays containing $D_i$'s for $i\in M_{\pm}$.  Then from the above map $g_+$, it must contract the weighted projective stack 
$\mathbb{P}(\mathbf{a})$ to this fixed point. 
\end{proof}
\begin{remark}
Similar result holds for the contract map 
$$g_-: X_-=[U_-/K]\to \overline{X}_0=\overline{[U_0/K]}.$$
Let $b_i:=D_i\cdot e$ for $D_i\cdot e<0$ and $\mathbf{b}=(D_i\cdot e: D_i\cdot e<0)$.
Then $g_-$ contracts the weighted projective stacks $\mathbb{P}(\mathbf{b})$.
\end{remark}

Let $N:=\sum_{i: D_i\cdot e>0}D_i\cdot e=-\sum_{i: D_i\cdot e<0}D_i\cdot e$, which is the sum of the weights.

\begin{proposition}\label{GR:BO}
We have for any $k\in\ZZ$,
$$\mathbb{GR}_k\cong \mathbb{BO}_{(N-1)+k}.$$
\end{proposition}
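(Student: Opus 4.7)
The plan is to compare both functors through the common window picture developed in the proof of the preceding subsection. First, I would recall that the commutative diagram at the end of the derived-equivalence argument realizes $\mathbb{BO}_k$ as the composition of window equivalences
\[
D^b_Q(X_-)\xleftarrow{\cong}\mathbf{H}\xleftarrow{\cong}(\pi_-)^\star\mathbf{G}_0\xrightarrow{\otimes\mathcal{L}_k}(\pi_-)^\star\mathbf{G}_k\xrightarrow{\cong}\widetilde{\mathbf{F}}\xrightarrow{\cong}D^b_Q(X_+),
\]
in which $\mathcal{L}_k$ is the $T\times\CC^\times$-equivariant line bundle on $V_0$ corresponding to $\sO_{\tX}(kE)$, having character $(0,k)\in \LL^\vee\oplus\ZZ$. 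Consequently, tensoring with $\mathcal{L}_k$ implements the grade-restriction window shift from $\mathbf{G}_0$ to $\mathbf{G}_k$ on $D^b_T(U_0)$.

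Second, I would observe that the $X_+$-side identification used by $\mathbb{GR}_{k}$, namely the equivalence $\mathbf{G}_k\cong D^b_Q(X_+)$ with $e$-weights in $[k, k+N)$, agrees with the rightmost equivalence in the factorization of $\mathbb{BO}_k$ above. The remaining discrepancy is on the $X_-$-side: the Bondal-Orlov factorization uses the identification $D^b_Q(X_-)\cong(\pi_-)^\star\mathbf{G}_0$, whereas $\mathbb{GR}_{k'}$ uses the identification $\mathbf{G}_{k'}\cong D^b_Q(X_-)$.

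Third, I would reconcile these two $X_-$-side identifications. The window $\mathbf{H}\subset D^b_{T\times\CC^\times}(V_{-|\sim})$ used in the Bondal-Orlov factorization is defined by $(0,1)$-weights in $[0,1)$, a window of length $1$, whereas the $X_-$-side grade-restriction window $\mathbf{G}_{k'}$ is defined by $(-e)$-weights in $[-N+k'+1,k'+1)$, a window of length $N$. Converting $\mathbf{H}$ back to an $e$-weight window on $U_0$, the length discrepancy $N-1$ between the two windows, together with the sign convention between the $e$- and $(-e)$-grade restrictions, forces an offset of $N-1$ in the grade-restriction index. Matching the two identifications then yields $\mathbb{BO}_k\cong\mathbb{GR}_{k-(N-1)}$, which is equivalent to $\mathbb{GR}_k\cong\mathbb{BO}_{(N-1)+k}$.

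The main obstacle will be the explicit computation of the $(N-1)$-offset in the third step. The length disparity $N-1=\eta_+-1$ between $\widetilde{\mathbf{F}}$ and $\mathbf{H}$ is ultimately what produces the shift, but carefully tracking the sign conventions between $e$- and $-e$-weights on $U_0$ and between the $(0,1)$- and $(e,1)$-weight conditions on the extended GIT requires careful bookkeeping. Once the offset is fixed, the proposition follows immediately from the agreement of the two identifications of $D^b_Q(X_\pm)$ with the common window category.
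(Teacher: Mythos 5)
Your strategy is genuinely different from the paper's: you try to identify $\mathbb{BO}_{(N-1)+k}$ and $\mathbb{GR}_{k}$ abstractly by matching the window subcategories through which each factors, whereas the paper follows \cite[Proposition~3.1]{ADM} and simply evaluates both functors on the split-generators $\sO_{X_-}(l)$, $k\le l\le k+(N-1)$, of $D^b_Q(X_-)$. The problem is that your third step --- the only place the number $N-1$ can enter --- is not an argument but a promissory note. You assert that ``the length discrepancy $N-1$ \dots forces an offset of $N-1$'' and then list the verification of this as ``the main obstacle.'' But that verification \emph{is} the proposition: without it you have only argued $\mathbb{BO}_k\cong\mathbb{GR}_{k+c}$ for some constant $c$ depending on conventions, and nothing in your write-up rules out $c=0$ or $c=-N$. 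There is also a smaller inaccuracy: you decorate $(\pi_-)^\star\mathbf{G}_k\to\widetilde{\mathbf{F}}\to D^b_Q(X_+)$ with equivalence signs, but in the paper's diagram the arrow $\widetilde{\mathbf{F}}\to D^b_Q(X_+)$ is the restriction realizing $(f_+)_\star$ and is not an equivalence (it is $\widetilde{\mathbf{F}}\cong D^b_Q(\tX)$ that holds, not $\widetilde{\mathbf{F}}\cong D^b_Q(X_+)$); what you need is that the composite restricts to the grade-restriction equivalence on some window, and determining which window that is is again exactly the offset computation you have deferred.

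The concrete way to pin down the offset --- and what the paper actually does --- is to compute both functors on line bundles. $\mathbb{GR}_k$ sends $\sO_{X_-}(l)\mapsto\sO_{X_+}(-l)$ for $k\le l\le k+(N-1)$ because $\mathbf{G}_k$ is split-generated by $\sO_{X_0}(k),\dots,\sO_{X_0}(k+N-1)$; on the other side the projection formula gives $\mathbb{BO}_{(N-1)+k}(\sO_{X_-}(l))=\sO_{X_+}(-l)\otimes(f_+)_\star\sO_{\tX}\bigl(((N-1)+k-l)E\bigr)$, and the identity $(f_+)_\star\sO_{\tX}(jE)\cong\sO_{X_+}$ valid exactly for $0\le j\le N-1$ (the fibers of the contraction being the weighted projective stacks of the preceding subsection) is what produces the shift by $N-1$. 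One must then also check that the two functors agree on the $\mathcal{E}xt$ groups between these generators, a point your proposal does not address at all; the paper handles it by the same argument as in \cite{ADM}. I would recommend either carrying out your weight bookkeeping in full detail, or switching to the generator computation, which localizes the entire difficulty of the proposition in the single pushforward identity above.
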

\begin{proof}
We generalize the proof of Proposition 3.1  in \cite{ADM}. We show that 
$\mathbb{GR}_k^{-1}\circ \mathbb{BO}_{(N-1)+k}$ takes $\sO_{X_-}(l)$ to  $\sO_{X_-}(l)$  and acts as identity on 
$$\mathcal{E}xt^i( \sO_{X_-}(l) , \sO_{X_-}(l^\prime))$$
for $k\leq l$, $l^\prime\leq k+(N-1)$, since thess objects split-generate the derived  category $D^b(X_-)$. 
First  $\mathbb{GR}_k$ takes $\sO_{X_-}(l)$ to  $\sO_{X_-}(-l)$
for $k\leq l\leq k+(N-1)$,  since the subcategory  $\mathbf{G}_k\subset D^b_{T}(U_0)=D^b(X_0)$ is the full-subcategory split-generated by 
$$\sO_{X_0}(k), \cdots, \sO_{X_0}(k+(N-1)).$$
Also
\begin{align*}
\mathbb{BO}_{(N-1)+k}(\sO_{X_-}(l))&=(f_+)_{\star}\left(\sO_{\tX}(((N-1)+k)E)\otimes (f_-)^\star(\sO_{X_-}(l)\right)\\
&=(f_+)_{\star}\left(\sO_{\tX}(l-(N-1)-k, -(N-1)-k)\right)\\
&=\sO_{X_+}(-l)\otimes (f_+)_{\star}\left(\sO_{\tX}(((N-1)+k-l)E)\right)
\end{align*}
and
$$(f_+)_{\star}\left(\sO_{\tX}(((N-1)+k-l)E)\right)\cong \sO_{X_+}$$
for $0\leq (N-1)+k\leq (N-1)$. 
So $\mathbb{GR}_k^{-1}\circ \mathbb{BO}_{(N-1)+k}$ takes $\sO_{X_-}(l)$ to  $\sO_{X_-}(l)$  for $k\leq l\leq k+(N-1)$. The proof that 
$\mathbb{GR}_k^{-1}\circ \mathbb{BO}_{(N-1)+k}$ acts as identity on $\mathcal{E}xt^i( \sO_{X_-}(l) , \sO_{X_-}(l^\prime))$ is the same as 
\cite[Proposition 3.1]{ADM}. 
\end{proof}

Let 
$$j_+: \bP(\mathbf{a})\hookrightarrow X_+; \quad  j_-: \bP(\mathbf{b})\hookrightarrow X_-$$
be the closed immersions for the weighted projective stacks $\bP(\mathbf{a})$ and $\bP(\mathbf{b})$.  To abuse notations, 
we understand $\sO_{\bP(\mathbf{b})}(k)$ as line bundle over $\bP(\mathbf{b})$, and at the same time taken as the coherent sheaf  
$j_{-\star}\sO_{\bP(\mathbf{b})}(k)$ on $X_-$.  The same situation holds for $j_+: \bP(\mathbf{a})\hookrightarrow X_+$.

\begin{proposition}\label{identity:GR}
We have the following result for the autoequivalence:
$$\mathbb{GR}_k^{-1}\circ \mathbb{GR}_{k+1}=\mathbb{T}_{\sO_{\mathbb{P}(\mathbf{b})}(k)}$$
associated with the spherical functor 
$$\mathbb{T}_{\sO_{\mathbb{P}(\mathbf{b})}(k)}: D_{Q}^b(X_-)\to D_{Q}^b(X_-)$$
defined by:
$$\mathbb{T}_{\sO_{\mathbb{P}(\mathbf{b})}(k)}(\mathcal{E})=\Cone(\sO_{\mathbb{P}(\mathbf{b})}(k)\otimes\RHom(\sO_{\mathbb{P}(\mathbf{b})}(k), \mathcal{E})\stackrel{\eval}{\rightarrow}\mathcal{E}).$$
\end{proposition}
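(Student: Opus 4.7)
The plan is to follow the window-shift framework of Halpern-Leistner--Shipman, checking the isomorphism of autoequivalences on a split-generating set of $D^b_Q(X_-)$. Since both $\mathbb{GR}_k^{-1}\circ \mathbb{GR}_{k+1}$ and $\mathbb{T}_{\sO_{\bP(\mathbf{b})}(k)}$ are autoequivalences of $D^b_Q(X_-)$, by the same reasoning as in the proof of Proposition~\ref{GR:BO} (closely following \cite[Proposition~3.1]{ADM}) it is enough to exhibit a natural isomorphism on the equivariant line bundles $\sO_{X_-}(l)$, $l\in\ZZ$, compatibly with morphisms between them. Recall that $\mathbf{G}_k\subset D^b_T(U_0)$ is split-generated by the line bundles $\sO_{X_0}(k),\sO_{X_0}(k+1),\dots,\sO_{X_0}(k+N-1)$ of $e$-weights in $[k,k+N)$, while $\mathbf{G}_{k+1}$ is split-generated by the line bundles of $e$-weights in $[k+1,k+N]$; the two windows differ only at their boundaries.

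First I would handle the \emph{overlap range}. For each $l\in\{k+1,\dots,k+N-1\}$, the object $\sO_{X_-}(l)$ lifts to the common element $\sO_{X_0}(l)\in \mathbf{G}_k\cap\mathbf{G}_{k+1}$, and consequently $\mathbb{GR}_k^{-1}\circ\mathbb{GR}_{k+1}(\sO_{X_-}(l))\cong \sO_{X_-}(l)$. On the other side, combining the Koszul resolution of $j_{-\star}\sO_{\bP(\mathbf{b})}(k)$ on $X_-$ associated with the regular sequence $\{z_i:i\in M_+\}$ with the Bott formula on the weighted projective stack $\bP(\mathbf{b})$, one computes $\RHom_{X_-}(\sO_{\bP(\mathbf{b})}(k),\sO_{X_-}(l))=0$ for $l$ in this range; hence the evaluation map defining $\mathbb{T}_{\sO_{\bP(\mathbf{b})}(k)}(\sO_{X_-}(l))$ vanishes and the cone is simply $\sO_{X_-}(l)$.

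Next I would treat the \emph{boundary case} $l=k$: here $\sO_{X_0}(k)$ lies in $\mathbf{G}_k$ but not $\mathbf{G}_{k+1}$, so the two windows lift $\sO_{X_-}(k)$ to genuinely different objects. A suitable dualisation of the Koszul complex for $\{z_i:i\in M_+\}$ on $X_0$ furnishes a distinguished triangle
$$\sO_{X_0}(k)\longrightarrow M_k\longrightarrow j_{-\star}\sO_{\bP(\mathbf{b})}(k)\otimes V^{\vee}$$
in $D^b_T(U_0)$, in which $M_k$ has all its $e$-weights in $[k+1,k+N]$ (and so lies in $\mathbf{G}_{k+1}$) and $V$ is a graded vector space encoding the Koszul shifts. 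Applying the restriction functor $\mathbb{GR}_k^{-1}$ to this triangle identifies $\mathbb{GR}_k^{-1}\circ\mathbb{GR}_{k+1}(\sO_{X_-}(k))$ with the cone of the evaluation morphism
$$\sO_{\bP(\mathbf{b})}(k)\otimes\RHom(\sO_{\bP(\mathbf{b})}(k),\sO_{X_-}(k))\stackrel{\eval}{\longrightarrow} \sO_{X_-}(k),$$
which is precisely $\mathbb{T}_{\sO_{\bP(\mathbf{b})}(k)}(\sO_{X_-}(k))$.

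Finally, to upgrade this object-level agreement to a natural isomorphism of functors, one checks compatibility on $\RHom$'s between the generators $\sO_{X_-}(l)$: both the window-shift and the Seidel--Thomas twist act on these $\RHom$'s through the same canonical evaluation morphisms arising from the above Koszul complex, so the compatibility is essentially automatic. Equivalently, and more cleanly, one can exhibit an isomorphism between the corresponding Fourier--Mukai kernels on $X_-\times X_-$, as in \cite[\S 3]{ADM} and \cite{HLS}. The main obstacle I anticipate is in carrying out the mutation triangle argument intrinsically and $Q$-equivariantly: one must verify that the Koszul resolution descends cleanly from $D^b_T(U_0)$ to $D^b_Q(X_-)$, and that the weighted projective stack $\bP(\mathbf{b})$ --- which may have a singular coarse moduli space but is smooth as a toric DM stack --- admits a full strong exceptional collection of line bundles $\sO_{\bP(\mathbf{b})}(j)$, so that the object-level identification globalises to an isomorphism of functors.
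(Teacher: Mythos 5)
Your overall strategy matches the paper's (both follow \cite[Prop.~3.2]{ADM}): check the identity on the line-bundle split-generators, note that both functors act as the identity on the overlap of the two windows, and reduce everything to one boundary line bundle where a Koszul mutation produces the defining cone of the twist. Your remark that the triviality of $\mathbb{T}_{\sO_{\bP(\mathbf{b})}(k)}$ on the overlap range requires the vanishing of $\RHom(j_{-\star}\sO_{\bP(\mathbf{b})}(k),\sO_{X_-}(l))$ is a point the paper leaves implicit and is worth making (though note that the individual Koszul terms $\RGamma(\sO(l-k-\textstyle\sum a_i))$ do not vanish termwise; the vanishing is of the total complex).

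The genuine problem is your boundary case. The paper uses the generators $\sO_{X_-}(k+1),\dots,\sO_{X_-}(k+N)$ and treats the \emph{top} boundary object $\sO_{X_-}(k+N)$: there $\mathbb{GR}_{k+1}$ sends it directly to a line bundle on $X_+$, and the single mutation occurs when lifting that line bundle back into $\mathbf{G}_k$, using the Koszul complex of $\{z_i : i\in M_+\}$ --- which is \emph{exact} on $X_+$ (its zero locus $\CC^{M_{\leq 0}}\cap U_0$ is the unstable stratum for $X_+$) and resolves $j_{-\star}\sO_{\bP(\mathbf{b})}$ on $X_-$. You instead treat $l=k$ and propose a triangle $\sO_{X_0}(k)\to M_k\to j_{-\star}\sO_{\bP(\mathbf{b})}(k)\otimes V^\vee$ with $M_k\in\mathbf{G}_{k+1}$, built from that same Koszul complex. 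But its third term is supported on $\{z_i=0 : i\in M_+\}$, which is \emph{not} removed when passing to $X_-$; hence $M_k|_{X_-}\neq \sO_{X_-}(k)$, so $M_k$ is not the $\mathbf{G}_{k+1}$-lift of $\sO_{X_-}(k)$ and cannot be used to evaluate $\mathbb{GR}_{k+1}$ on it. What your triangle actually produces is the $\mathbf{G}_{k+1}$-lift of $\sO_{X_+}(k)$ (the third term does die on $X_+$), so restricting it to $X_-$ computes $\mathbb{GR}_{k+1}^{-1}\circ\mathbb{GR}_{k}(\sO_{X_-}(k))$ --- the \emph{inverse} of the autoequivalence in the Proposition --- not $\mathbb{GR}_{k}^{-1}\circ\mathbb{GR}_{k+1}(\sO_{X_-}(k))$. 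If you insist on the boundary case $l=k$, the first lift requires mutating $\sO_{X_0}(k)$ across the other unstable stratum $\{z_i=0 : i\in M_-\}$ (Koszul complex in the variables $z_i$, $i\in M_-$, whose restriction to $X_+$ is supported on $\bP(\mathbf{a})$, not $\bP(\mathbf{b})$), and the resulting object has a weight-$(k+N)$ summand that exits the $\mathbf{G}_k$ window, forcing a second mutation. The clean repair is to place the boundary case at $l=k+N$ as the paper does; your concluding reduction to compatibility on the $\mathcal{E}xt$'s then goes through as in \cite{ADM}.
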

\begin{proof}
We generalize the proof  in Proposition 3.2 of \cite{ADM}. 
We prove the $k=0$ case, since other cases are similar. 
It suffices to check that both functors act on  $\sO_{X_-}(1), \cdots, \sO_{X_-}(N)$, since these objects split-generate the derived category $D^b_{Q}(X_-)$. 
Clearly $\mathbb{GR}_0^{-1}\circ \mathbb{GR}_{1}$ and $\mathbb{T}_{\sO_{\mathbb{P}(\mathbf{b})}}$ act on $\sO_{X_-}(1), \cdots, \sO_{X_-}(N-1)$ as identities. This is due to the facts that 
the full subcategories $\mathbf{G}_0\subset D^b_{Q}(X_0)$; and $\mathbf{G}_1\subset D^b_{Q}(X_0)$ are split-generated by the objects
$\sO_{X_0}, \cdots, \sO_{X_0}(N-1)$; and $\sO_{X_0}(1), \cdots, \sO_{X_0}(N)$, respectively. 

We check the case $\sO_{X_-}(N)$. 
Consider the Koszul resolution of the substack 
$[\CC^{M<0}\cap U_0/K]\subset X_0$, which is cut out by a transverse section of $\sO_{X_0}(-1)\otimes S_+$:
$$\sO_{X_0}(N)\otimes \det(S_+^*)\to \cdots \to\sO_{X_0}(2)\otimes \wedge^2(S_+^*)\to \sO_{X_0}(1)\otimes S_+^*\to \sO_{X_0}\to \sO_{[\CC^{M<0}\cap U_0/K]}.$$
Restrict to $X_-$ we get:
\begin{equation}\label{KoszulX-}
\sO_{X_-}(N)\otimes \det(S_+^*)\to \cdots \to\sO_{X_-}(2)\otimes \wedge^2(S_+^*)\to \sO_{X_-}(1)\otimes S_+^*\to \sO_{X_-}\to \sO_{j_{-\star}\sO_{\bP(\mathbf{b})}}.
\end{equation}  
Then we restrict to $X_+$, we get: (Note that there is no last term.)
\begin{equation}\label{KoszulX+}
\sO_{X_+}(-N)\otimes \det(S_+^*)\to \cdots \to\sO_{X_+}(-2)\otimes \wedge^2(S_+^*)\to \sO_{X_+}(-1)\otimes S_+^*\to \sO_{X_+}.
\end{equation}
Now we have 
$$\mathbb{GR}_1(\sO_{X_-}(N))=\sO_{X_+}(-N).$$
Use (\ref{KoszulX+}) we get:
$$\underbrace{\sO_{X_+}(-(N-1))\otimes S_+}_{\deg 0}\to\sO_{X_+}(-(N-2))\otimes \wedge^2(S_+)\to\cdots \to \sO_{X_+}\otimes \det(S_+)$$
Then applying the functor $\mathbb{GR}_0$, 
$$\underbrace{\sO_{X_-}(N-1)\otimes S_+}_{\deg 0}\to\sO_{X_-}(N-2)\otimes \wedge^2(S_+)\to\cdots \to \sO_{X_-}\otimes \det(S_+)$$
which is the middle $N$-terms of (\ref{KoszulX-}) tensored with $\det(S_+)$, and this extension is
$$\Cone(j_{-\star}\sO_{\bP(\mathbf{b})}\otimes\det(S_+)[-N]\stackrel{}{\rightarrow}\sO_{X_-}(N)).$$

On the other hand, the spherical twist 
$$\mathbb{T}_{\sO_{\mathbb{P}(\mathbf{b})}}(\sO_{X_-}(N))=
\Cone(j_{-\star}\sO_{\mathbb{P}(\mathbf{b})}\otimes\RHom(j_{-\star}\sO_{\mathbb{P}(\mathbf{b})}, \sO_{X_-}(N))\stackrel{}{\rightarrow}\sO_{X_-}(N))$$ 
has the same description. 
These two extensions are the same since the functors $\mathbb{GR}_0^{-1}\circ\mathbb{GR}_1$ and $\mathbb{T}_{\sO_{\mathbb{P}(\mathbf{b})}}$ acts in the same way on the Exts. 
\end{proof}

Let 
$$\mathbb{BO}_k^\prime:  D_{Q}^b(X_+)\to D_{Q}^b(X_-)$$
be the general Bondal-Orlov fucntors  other way around by:
$$\mathbb{BO}_k^\prime:=(f_-)_{\star}(\sO_{\tX}(kE)\otimes (f_+)^\star(--)).$$
The degree zero $\mathbb{BO}^\prime_0$ is 
the Fourier-Mukai transform $\mathbb{FM}^\prime$. 
\begin{corollary}
We have:
$$\mathbb{FM}^\prime\circ \mathbb{FM}=\mathbb{T}^{-1}_{\sO_{\bP(\mathbf{b})}(-1)}\circ\cdots\circ 
\mathbb{T}^{-1}_{\sO_{\bP(\mathbf{b})}(-(N-1))}.$$
\end{corollary}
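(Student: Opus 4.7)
The strategy is to express both $\mathbb{FM}$ and $\mathbb{FM}^\prime$ as window-shift equivalences $\mathbb{GR}_k$ (or their inverses), and then telescope the resulting composition into a product of inverse spherical twists using Proposition~\ref{identity:GR}.

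By Proposition~\ref{GR:BO} applied with $k=1-N$, one reads off $\mathbb{FM}=\mathbb{BO}_0=\mathbb{GR}_{1-N}$. For $\mathbb{FM}^\prime$, I would invoke the symmetric counterpart of Proposition~\ref{GR:BO} obtained by interchanging the roles of $X_+$ and $X_-$ (replacing $e$ by $-e$ and $\mathbf{a}$ by $\mathbf{b}$, while $N=\eta_+=\eta_-$ is preserved). This produces window equivalences $\mathbb{GR}_k^\prime\colon D^b_Q(X_+)\to D^b_Q(X_-)$ satisfying $\mathbb{GR}_k^\prime\cong\mathbb{BO}^\prime_{(N-1)+k}$. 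Comparing grade-restriction ranges shows that the window subcategory defining $\mathbb{GR}^\prime_{1-N}$ coincides with the subcategory $\mathbf{G}_0$ defining $\mathbb{GR}_0$ (both are cut out by $e$-weights in $[0,N-1]$); hence $\mathbb{GR}^\prime_{1-N}$ and $\mathbb{GR}_0$ are mutually inverse, and
\[
\mathbb{FM}^\prime \;=\; \mathbb{BO}^\prime_0 \;=\; \mathbb{GR}^\prime_{1-N} \;=\; \mathbb{GR}_0^{-1}.
\]

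Combining the two identifications gives $\mathbb{FM}^\prime\circ\mathbb{FM}=\mathbb{GR}_0^{-1}\circ\mathbb{GR}_{1-N}$. Inserting $N-2$ trivial factors $\mathbb{GR}_{-k}\circ\mathbb{GR}_{-k}^{-1}=\mathrm{id}$ for $k=1,\dots,N-2$ and regrouping produces the telescope
\[
\bigl(\mathbb{GR}_0^{-1}\circ\mathbb{GR}_{-1}\bigr)\circ\bigl(\mathbb{GR}_{-1}^{-1}\circ\mathbb{GR}_{-2}\bigr)\circ\cdots\circ\bigl(\mathbb{GR}_{-(N-2)}^{-1}\circ\mathbb{GR}_{-(N-1)}\bigr).
\]
By Proposition~\ref{identity:GR}, each factor $\mathbb{GR}_{-k+1}^{-1}\circ\mathbb{GR}_{-k}$ is the inverse of $\mathbb{T}_{\sO_{\bP(\mathbf{b})}(-k)}$, producing the claimed identity $\mathbb{T}^{-1}_{\sO_{\bP(\mathbf{b})}(-1)}\circ\cdots\circ\mathbb{T}^{-1}_{\sO_{\bP(\mathbf{b})}(-(N-1))}$.

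The hard part is the identification $\mathbb{FM}^\prime=\mathbb{GR}_0^{-1}$: the symmetric form of Proposition~\ref{GR:BO} is not stated in the paper, so one has to rerun its proof with $f_+$ and $f_-$ interchanged. Concretely, one verifies on the split-generators $\sO_{X_+}(l)$ for $l\in\{1-N,\dots,0\}$ that both $\mathbb{FM}^\prime=(f_-)_\star(f_+)^\star$ and $\mathbb{GR}_0^{-1}$ send $\sO_{X_+}(l)$ to $\sO_{X_-}(-l)$, using that $(f_-)_\star\sO_{\tX}(jE)\cong\sO_{X_-}$ for the pertinent range of $j$ exactly as in Proposition~\ref{GR:BO}, and then matches the induced maps on $\mathrm{Ext}$-groups as in Proposition~\ref{identity:GR}. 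Once the symmetric window identification is in place the telescoping is immediate.
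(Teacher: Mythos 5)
Your argument is correct, and its skeleton is the same as the paper's: reduce both Fourier--Mukai functors to window-shift functors and then telescope $\mathbb{GR}_0^{-1}\circ\mathbb{GR}_{1-N}$ into a product of the inverse twists $\mathbb{T}^{-1}_{\sO_{\bP(\mathbf{b})}(-k)}$ using Proposition~\ref{identity:GR}; your index bookkeeping in the telescope checks out. The one place you genuinely diverge is the identification $\mathbb{FM}^\prime=\mathbb{GR}_0^{-1}$. The paper gets this from the asserted Grothendieck-duality relation $\mathbb{BO}_k^{-1}=\mathbb{BO}^\prime_{(N-1)-k}$ (take $k=N-1$ and use $\mathbb{GR}_0\cong\mathbb{BO}_{N-1}$), whereas you introduce the mirror window functors $\mathbb{GR}^\prime_k\cong\mathbb{BO}^\prime_{(N-1)+k}$ and observe that the grade-restriction window for $\mathbb{GR}^\prime_{1-N}$ is literally the subcategory $\mathbf{G}_0$ (here the crepancy condition $\sum_i D_i\in W$, which forces $\eta_+=\eta_-=N$, is what makes the two windows line up), so that $\mathbb{GR}^\prime_{1-N}$ and $\mathbb{GR}_0$ are inverse to each other by construction. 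Your route costs you a rerun of the proof of Proposition~\ref{GR:BO} with $f_+$ and $f_-$ interchanged, which you correctly flag and which is routine; in exchange it stays entirely inside the window formalism and effectively supplies a proof of the duality statement that the paper only asserts. Both versions are sound; yours is the more self-contained of the two.
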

\begin{proof}
The results in Proposition \ref{GR:BO} and Proposition \ref{identity:GR} imply that 
$$\mathbb{BO}_{-k}^\prime\circ \mathbb{BO}_{(N-1)+k+1}=\mathbb{T}_{\sO_{\mathbb{P}(\mathbf{b})}(k)}.$$
Hence we have:
$$\mathbb{BO}_{-k-1}^\prime\circ \mathbb{BO}_{(N-1)+k}=\mathbb{T}^{-1}_{\sO_{\mathbb{P}(\mathbf{b})}(k)}.$$
By Grothendieck duality, we have that 
$\mathbb{BO}_k^{-1}=\mathbb{BO}^\prime_{(N-1)-k}$. Then the result is a direct calculation. 
\end{proof}
\begin{remark}
By a similar argument we have: 
$$\mathbb{FM}\circ \mathbb{FM}^\prime=\mathbb{T}^{-1}_{\sO_{\bP(\mathbf{a})}(-1)}\circ\cdots\circ 
\mathbb{T}^{-1}_{\sO_{\bP(\mathbf{a})}(-(N-1))}.$$
\end{remark}


\end{document}